\documentclass[a4paper,12pt]{amsart}

\baselineskip=12pt

\evensidemargin= 0 cm \oddsidemargin= 0 cm \topmargin -0.5cm
\textheight 23.5cm \textwidth 16.3cm

\usepackage{graphicx}
\usepackage{amsmath,amssymb,amsthm,amsfonts}
\usepackage{amssymb}

\newtheorem{thm}{Theorem}[section]

\newtheorem{lem}[thm]{Lemma}
\newtheorem{prop}[thm]{Proposition}
\newtheorem{rem}[thm]{Remark}

\newcommand{\bremark}{\begin{rem} \textup}
\newcommand{\eremark}{\end{rem} }

\newcommand{\cuad}{{\sqcap\kern-.68em\sqcup}}

\renewcommand{\rho}{\varrho}
\renewcommand{\theta}{\vartheta}

\begin{document}

\subjclass[2000]{35J70; 35J62; 35B06}

\parindent 0pc
\parskip 6pt
\overfullrule=0pt

\title[The moving plane method]{The moving plane method for singular semilinear elliptic problems}

\author{Annamaria Canino}
\address{Annamaria Canino,
Universit\`a della Calabria\\
Dipartimento di Matematica e Informatica\\
Pietro Bucci 31B, I-87036 Arcavacata di Rende, Cosenza, Italy}
\email{canino@mat.unical.it}

\author{Luigi Montoro}
\address{Luigi Montoro,
Universit\`a della Calabria\\
Dipartimento di Matematica e Informatica\\
Pietro Bucci 31B, I-87036 Arcavacata di Rende, Cosenza, Italy}
\email{montoro@mat.unical.it}

\author{Berardino Sciunzi}
\address{Berardino Sciunzi,
Universit\`a della Calabria\\
Dipartimento di Matematica e Informatica\\
Pietro Bucci 31B, I-87036 Arcavacata di Rende, Cosenza, Italy}
\email {sciunzi@mat.unical.it}

\date{\today}

\keywords{Singular semilinear equations, symmetry of solutions, moving plane method.}

\subjclass[2000]{35B01,35J61,35J75.}


\begin{abstract}
We consider positive solutions  to  semilinear elliptic problems with singular nonlinearities, under zero Dirichlet boundary condition.
We exploit a refined version of the moving plane method to prove symmetry and monotonicity properties of the solutions, under general assumptions on the nonlinearity.
\end{abstract}

\maketitle

\date{\today}



\maketitle

\section{introduction}

In this paper we study symmetry and monotonicity properties of positive  solutions to the problem
\begin{equation}
	\label{problem}
\begin{cases}
-\Delta\,u\,=\,\frac{1}{u^\gamma}\,+\,f(x,u) & \text{in $\Omega$,}  \\
u> 0 & \text{in $\Omega$,}  \\
u=0 & \text{on $\partial\Omega$.}
\end{cases}
\end{equation}
where $\gamma>0$, $\Omega $ is a bounded smooth domain of $\mathbb{R}^n$ and
$u\in\,C(\overline{\Omega})\cap C^2(\Omega)$. \\

Starting from the pioneering work \cite{crandall} singular semilinear elliptic equations have been intensely studied, see e.g. \cite{boccardo2,boccardo,C23,CanDeg,gras1,GOW,saccon,lair,lazer,kaw,marco,stuart}. Furthermore,
by a simple change of variables, it also follows that the problem  is  related to equations involving a first order term of the type $\frac{|\nabla u|^2}{u}$. We refer the readers to \cite{arcoya,brandolini,giachetti} for related results in this setting.\\

The main difficulties that we have to face are given by the fact that solutions in general are not in $H^1_0(\Omega)$ and the nonlinearity
$\frac{1}{s^\gamma}\,+\,f(x,s)$ is not  Lipschitz continuous at zero.
Note that solutions  are not in $H^1_0(\Omega)$ already
in the case $f\equiv 0$, see  \cite{lazer}. Therefore, in particular,
problem \eqref{problem} has to be understood in the weak distributional meaning with test functions with compact support in $\Omega$, that is
\begin{equation}\label{eq:fgakghfhksajdgfjashagc}
\int_\Omega\,(\nabla u,  \nabla \varphi)\,dx  = \int_\Omega \frac{\varphi}{{u}^\gamma}\,dx + \int_\Omega f(x,u){\varphi}\,dx \qquad\forall \varphi\in C^1_c(\Omega).
\end{equation}

The proof of our symmetry result will be based on the moving plane technique, see \cite{GNN,serrin}, as developed and improved in \cite{BN}.
The crucial point here is the lack of regularity of the solutions near the boundary, that  is an obstruction to the use of  the test functions technique exploited in \cite{BN,GNN,serrin}.

As we will see,
a special role in this issue is plaid by $u_0$, the solution  to the pure singular problem:
$u_0\in\,C(\overline{\Omega})\cap C^2(\Omega)$ and
\begin{equation}
	\label{problem0}
\begin{cases}
-\Delta\,u_0\,=\,\frac{1}{{u_0}^\gamma}\, & \text{in $\Omega$,}  \\
u_0> 0 & \text{in $\Omega$,}  \\
u_0=0 & \text{on $\partial\Omega$.}
\end{cases}
\end{equation}

\noindent The solution $u_0$  is  unique (see  \cite{CanDeg,ccmcan,nodt,oliva}) and the existence has been proved in \cite{boccardo,CanDeg}.
By the variational characterization provided in \cite{CanDeg}, it follows that any solution $u$ to problem \eqref{problem} enjoy the decomposition
\[
u=u_0+w\qquad\text{for some}\quad w\in H^1_0(\Omega)\,.
\]
Such a decomposition has been exploited in \cite{gras1} (see also the applications in \cite{C23,Cancan,gras}) in order to prove symmetry and monotonicity properties of the solution, via a moving plane type technique applied to $w$, the $H^1_0(\Omega)$ part of the solution. Since $w$ is not a solution to the problem, such approach required an extra condition on the nonlinearity $f(x,u)$ that, in  \cite{gras1},  is assumed to be monotone increasing in the $u$ variable.\\

The aim of this paper is to remove such a restriction on the nonlinearity and prove symmetry and monotonicity properties of the solution under general assumptions, namely in the case of locally Lipschitz continuous nonlinearities that, more precisely,  fulfill

\begin{itemize}
\item[($hp$)]
$f(x,t)$ is a Carath\'{e}odory function which is uniformly locally Lipschitz continuous with respect to the second variable. Namely, for any $M>0$ given, it follows
\[
|f(x,t_1)-f(x,t_2)|\leq L_f(M)|t_1-t_2|,\quad x\in\Omega,\quad t_1,t_2\in[0\,,\,M].
\]
\end{itemize}

 Our main result is the following
\begin{thm}\label{t3}
Let $u\in\,C(\overline{\Omega})\cap C^2(\Omega)$ be  a solution to \eqref{problem}.  Assume that the domain $\Omega$ is   convex w.r.t. the $\nu$-direction $(\nu \in S^{N-1})$ and symmetric w.r.t. $T_0^\nu$, where
$$ T_0^\nu=\{x\in \mathbb{R}^N : x\cdot \nu=0\}.$$
With the notation $
x_\lambda^\nu=R_\lambda^\nu(x)=x+2(\lambda -x\cdot\nu)\nu$,
assume that  $f$ satisfies $(hp)$, $f(\cdot, t)$ is non decreasing in the $x\cdot \nu$-direction in the set
$\Omega_0^\nu\,:=\,\Omega\cap\{x\cdot \nu<0\},$
for all $t\in [0,\infty)$ and
$$f(x,t)= f(x_0^\nu,t) \quad  \text{if} \,\,x\in\Omega_0 \,\, \text{and} \,\,t\in [0,\infty).$$

Then $u$ is symmetric w.r.t. $T_0^\nu$ and non-decreasing w.r.t. the $\nu$-direction in~$\Omega_0^\nu$.
In particular, if  $\Omega$ is a ball centered at the origin of radius $R>0$, then $u$ is radially symmetric with $\frac{\partial u}{\partial r}(r)<0$ for
$ 0<r<R$.
\end{thm}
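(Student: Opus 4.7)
The plan is to run the moving plane method on $u$ directly, using the favourable sign of the singular term on the bad set $\{u > u_\lambda\}$ to compensate for the absence of monotonicity of $f$ in $u$, and using the $H^1_0$-decomposition $u = u_0 + w$ recalled after \eqref{problem0} only to legitimise the test functions. Fix $\nu \in S^{N-1}$; after rotation take $\nu = e_1$ and write $x_\lambda := (2\lambda - x_1, x_2, \dots, x_N)$, $\Omega_\lambda := \Omega \cap \{x_1 < \lambda\}$, $u_\lambda(x) := u(x_\lambda)$, $W_\lambda := u - u_\lambda$, and $a := \inf\{x_1 : x \in \Omega\} < 0$. Using the convexity of $\Omega$ in the $\nu$-direction and its symmetry across $T_0^\nu$, one verifies that for every $\lambda \in (a,0]$ and every $x \in \partial\Omega$ with $x_1 < \lambda$ the reflected point $x_\lambda$ lies in $\overline\Omega$ (and in $\Omega$ when $\lambda < 0$); together with $u \equiv 0$ on $\partial\Omega$ this gives $W_\lambda \le 0$ on $\partial\Omega \cap \{x_1 \le \lambda\}$ and $W_\lambda = 0$ on $T_\lambda \cap \overline{\Omega}$.

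Subtracting the equations for $u$ and $u_\lambda$ in the distributional sense on $\Omega_\lambda$ yields
\[
-\Delta W_\lambda \;=\; \Bigl(\frac{1}{u^\gamma} - \frac{1}{u_\lambda^\gamma}\Bigr) + \bigl(f(x,u) - f(x,u_\lambda)\bigr) + \bigl(f(x,u_\lambda) - f(x_\lambda,u_\lambda)\bigr).
\]
Splitting $\Omega_\lambda$ according to whether $x_\lambda \cdot \nu \le 0$ or $x_\lambda \cdot \nu > 0$, the assumed symmetry $f(x,t) = f(x_0^\nu,t)$ together with the monotonicity of $f(\cdot,t)$ in the $\nu$-direction on $\Omega_0^\nu$ forces the third bracket to be nonpositive on all of $\Omega_\lambda$. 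On $\{W_\lambda > 0\}$ the strict monotonicity of $s \mapsto s^{-\gamma}$ makes the first bracket strictly negative, while $(hp)$ bounds the second by $L_f(M)\,W_\lambda^+$ with $M := \|u\|_\infty$. One thus obtains the pointwise inequality $-\Delta W_\lambda \le L_f(M)\,W_\lambda^+$ on $\{W_\lambda > 0\}$.

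The main obstacle, and the heart of the argument, is to promote this to an energy estimate. The candidate test function $W_\lambda^+$ is not a priori admissible, since $\nabla u$ is unbounded near $\partial\Omega$ and $\mathrm{supp}(W_\lambda^+)$ can touch $\partial\Omega$ at $\partial\Omega \cap T_\lambda$. I would exploit the decomposition $u = u_0 + w$ with $w \in H^1_0(\Omega)$: applying the moving plane to the pure singular problem \eqref{problem0} (the monotone setting $f \equiv 0$, simpler than the general case and essentially contained in \cite{gras1,nodt}) gives $u_0 \le (u_0)_\lambda$ in $\Omega_\lambda$, so $W_\lambda^+ \le (w-w_\lambda)^+ \in H^1_0(\Omega_\lambda)$. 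Testing \eqref{eq:fgakghfhksajdgfjashagc} for $u$ and $u_\lambda$ against a sequence $\psi_k W_\lambda^+$ with Lipschitz cutoffs $\psi_k \to 1$ and $\mathrm{supp}(\psi_k) \Subset \Omega$, using that the singular difference is nonpositive on $\{W_\lambda > 0\}$ and can be discarded via monotone convergence, the limit yields
\[
\int_{\Omega_\lambda} |\nabla W_\lambda^+|^2 \, dx \;\le\; L_f(M) \int_{\Omega_\lambda} (W_\lambda^+)^2 \, dx.
\]

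The remainder is the Berestycki--Nirenberg scheme. The Sobolev--Poincar\'e inequality on $\Omega_\lambda \cap \{W_\lambda > 0\}$ gives $\|W_\lambda^+\|_{L^2}^2 \le C\,|\Omega_\lambda|^{2/N}\,\|\nabla W_\lambda^+\|_{L^2}^2$, so $W_\lambda^+ \equiv 0$ whenever $|\Omega_\lambda|$ is small enough; this starts the plane just above $\lambda = a$. Setting
\[
\Lambda_0 := \sup\bigl\{\mu \in (a,0] : u \le u_\lambda \text{ in } \Omega_\lambda \text{ for every } \lambda \in (a,\mu]\bigr\}
\]
and supposing $\Lambda_0 < 0$, the strong maximum principle applied to $-W_{\Lambda_0} \ge 0$ on $\Omega_{\Lambda_0}$ (where $u_{\Lambda_0}$ is bounded below and the singularity is absent) together with the strict boundary inequality on $\partial\Omega \cap \{x_1 < \Lambda_0\}$ forces $W_{\Lambda_0} < 0$ in $\Omega_{\Lambda_0}$; continuity of $u$ on compact subsets of $\Omega_{\Lambda_0}$ combined with the small-measure estimate above then extends the comparison to $\lambda = \Lambda_0 + \delta$ for small $\delta > 0$, contradicting the definition of $\Lambda_0$. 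Hence $\Lambda_0 = 0$, and replacing $\nu$ by $-\nu$ gives the reverse inequality, yielding symmetry across $T_0^\nu$ and monotonicity in $\Omega_0^\nu$. For the ball the above holds for every $\nu \in S^{N-1}$, whence radial symmetry, and the strict sign $\partial_r u(r) < 0$ on $(0,R)$ follows from a Hopf boundary lemma applied to $W_\lambda$ at each $\lambda \in (a, 0)$.
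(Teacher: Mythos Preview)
Your outline follows the Berestycki--Nirenberg scheme in spirit, but the central step---passing from the cutoff identity to the clean energy inequality
\[
\int_{\Omega_\lambda}|\nabla W_\lambda^+|^2\,dx\le L_f(M)\int_{\Omega_\lambda}(W_\lambda^+)^2\,dx
\]
---is not justified, and this is precisely where the paper invests all of its new work. Two concrete problems:

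\textbf{(i)} The claim $(w-w_\lambda)^+\in H^1_0(\Omega_\lambda)$ is not correct in general. On $\partial\Omega\cap\{x_1<\lambda\}$ one has $w=0$ in the trace sense, but $w_\lambda(x)=w(x_\lambda)$ is the value of $w$ at an interior point $x_\lambda\in\Omega$; since $w=u-u_0$ need not be nonnegative under $(hp)$, the trace of $(w-w_\lambda)^+=(-w_\lambda)^+$ can be strictly positive.

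\textbf{(ii)} Even if it held, a pointwise bound $0\le W_\lambda^+\le (w-w_\lambda)^+$ says nothing about $\nabla W_\lambda^+$ and hence does not place $W_\lambda^+$ in $H^1(\Omega_\lambda)$. When you test with $\psi_k W_\lambda^+$ you produce the cross term $\int_{\Omega_\lambda}W_\lambda^+\,\nabla W_\lambda\cdot\nabla\psi_k$, and to let $k\to\infty$ you must show it vanishes. Away from $\partial\Omega\cap T_\lambda$ this is harmless (there $u<u_\lambda$ and $W_\lambda^+=0$), but near $\partial\Omega\cap T_\lambda$ both $u$ and $u_\lambda$ tend to zero and the set $\{W_\lambda>0\}$ can accumulate on the boundary; there $|\nabla u|$ is unbounded (for large $\gamma$ one does not even have $u\in H^1$), and nothing in your argument controls $\int |\nabla\psi_k|\,W_\lambda^+\,|\nabla W_\lambda|$.

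The paper's proof circumvents this exactly by \emph{not} trying to use $W_\lambda^+$ as a test function. It first proves the barrier estimate $u\le C\,d(x)^{1/(\gamma+1)}$ (Lemma~\ref{lem:viva}) and then tests with $\Psi_\varepsilon=[(u-u_\lambda)^+]^{\beta}\varphi_\varepsilon^2$ for $\beta>\max\{1,\gamma,(\gamma+1)/2\}$. The extra power $\beta$ is what makes the cross term $\int |\nabla\varphi_\varepsilon|\,[(u-u_\lambda)^+]^{\beta}\,|\nabla(u-u_\lambda)^+|\varphi_\varepsilon$ go to zero: combining the barrier with a preliminary bound on $[(u-u_\lambda)^+]^{(\gamma+1)/2}$ in $H^1_0(\Omega_\lambda)$ (Proposition~\ref{prop:alpha>gamma}) one gets the rate $\varepsilon^{2(\beta-\gamma)/(\gamma+1)}$. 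The Poincar\'e/small-measure step is then run for $[(u-u_\lambda)^+]^{(\beta+1)/2}$ rather than for $(u-u_\lambda)^+$, and the continuation and strong-maximum-principle parts proceed as you describe. In short, the decomposition $u=u_0+w$ is \emph{not} used at all in the paper; the whole point (cf.\ the discussion in the introduction) is to replace it by the ``power test function + boundary decay'' mechanism, which is exactly the ingredient missing from your sketch.
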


The key point in the proof of Theorem \ref{t3} is the study of the problem near the boundary. We combine a fine analysis of the behaviour of the solution near the boundary based on comparison arguments that go back to \cite{CanDeg}, with an improved test functions technique. Let us finally point out that, the monotonicity assumption on $f$, with respect to the first variable, is necessary for the applicability of the moving plane method. This is  well known already in the case of non singular nonlinearities.

\section{The symmetry result}
To state the next results we need some notations. Let $\nu$ be a direction in $\mathbb{R}^N$ with $|\nu|=1$. Given a real number $\lambda$ we set
\begin{equation}\nonumber
T_\lambda^\nu=\{x\in \mathbb{R}^N:x\cdot\nu=\lambda\},
\end{equation}
\begin{equation}\nonumber
\Omega_\lambda^\nu=\{x\in \Omega:x\cdot \nu <\lambda\}
\end{equation}
and
\begin{equation}\nonumber
x_\lambda^\nu=R_\lambda^\nu(x)=x+2(\lambda -x\cdot\nu)\nu,
\end{equation}
that is the reflection trough the hyperplane $T_\lambda^\nu$. Moreover we set
\begin{equation}\nonumber
(\Omega_\lambda^\nu)'=R_\lambda^\nu(\Omega_\lambda^\nu).
\end{equation}
Observe that $(\Omega_\lambda^\nu)'$ may be not contained in $\Omega$. Also we take
\begin{equation}\nonumber
a(\nu)=\inf _{x\in\Omega}x\cdot \nu.
\end{equation}
When $\lambda >a(\nu)$, since $\Omega_\lambda^\nu$ is nonempty, we set
\begin{equation}\nonumber
\Lambda_1(\nu)=\{\lambda : (\Omega_t^\nu)'\subset \Omega\,\, \text{for any}\,\,a(\nu)< t\leq\lambda\},\end{equation}
and
\begin{equation}\nonumber
\lambda_1(\nu)=\sup \Lambda_1(\nu).
\end{equation}
Moreover we set
\begin{equation}\nonumber
u_\lambda^\nu(x)=u(x_\lambda^\nu)\,,
\end{equation}
 for any $a(\nu)<\lambda\leq \lambda_1(\nu)$.
Moreover let us define
\begin{equation}\nonumber
d(x)=\text{dist}(x,\partial \Omega), \qquad \forall x\in \mathcal I_\delta(\partial \Omega),
\end{equation}
whit $I_\delta(\partial \Omega)$ a neighborhood of radius $\delta>0$ of $\partial \Omega$, with the \emph{unique nearest point property}, see \cite{carlo} and the references therein. We start proving the following
\begin{lem}\label{lem:viva}
Let $u$ be a solution to \eqref{problem}. Then \begin{equation}\nonumber
u(x)\leq C\,d(x)^{\frac{1}{\gamma+1}} \qquad \text{in } \,\,\mathcal I_\delta(\partial \Omega),
\end{equation}
for some positive constant
 $C= C(f,\gamma,\delta,\Omega,\|u\|_{L^{\infty}(\Omega)})$.
\end{lem}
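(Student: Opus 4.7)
The plan is to compare $u$ with the supersolution $v := u_0 + \tilde w$, where $u_0$ is the solution to the pure singular problem \eqref{problem0} and $\tilde w \in H^1_0(\Omega)$ is an auxiliary correction. Recall from \cite{CanDeg} the decomposition $u = u_0 + w$ with $w \in H^1_0(\Omega) \cap L^\infty(\Omega)$, together with the sharp boundary behaviour
\[
c\, d(x)^{2/(\gamma+1)} \,\leq\, u_0(x) \,\leq\, C_1\, d(x)^{1/(\gamma+1)} \qquad \text{in } \mathcal I_\delta(\partial\Omega).
\]
Hence it suffices to show $w(x) \leq C'\, d(x)$ near the boundary, since then $u = u_0 + w \leq C_1\, d^{1/(\gamma+1)} + C'\,\delta^{\gamma/(\gamma+1)}\, d^{1/(\gamma+1)}$ in $\mathcal I_\delta$.

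Set $M := \sup_{x\in\Omega,\ t\in[0,\|u\|_{L^\infty(\Omega)}]} |f(x,t)|$, which is finite by $(hp)$. Let $\tilde w \in H^1_0(\Omega)$ solve $-\Delta \tilde w = M$ in $\Omega$; by Schauder theory on the smooth domain $\Omega$ and Hopf's lemma, $0 \leq \tilde w(x) \leq C_2\, d(x)$ in $\mathcal I_\delta$. I claim $w \leq \tilde w$ in $\Omega$; the candidate test function is $\psi := (w - \tilde w)_+ \in H^1_0(\Omega)\cap L^\infty(\Omega)$. Subtracting the weak formulations \eqref{eq:fgakghfhksajdgfjashagc} for $u$, \eqref{problem0} for $u_0$, and the $H^1_0$-weak equation for $\tilde w$, one gets, for any $\phi \in C^1_c(\Omega)$,
\[
\int_\Omega \nabla(w - \tilde w) \cdot \nabla \phi\, dx \;=\; \int_\Omega \left(\frac{1}{u^\gamma} - \frac{1}{u_0^\gamma}\right)\phi\, dx \;+\; \int_\Omega \bigl(f(x,u) - M\bigr)\phi\, dx.
\]

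The crucial step is to extend this identity to $\phi = \psi$, the ``improved test functions technique'' announced in the introduction. Integrability of the singular right-hand side against $\psi$ follows from the mean value bound $|1/u^\gamma - 1/u_0^\gamma| \leq \gamma\, u_0^{-\gamma-1}\, w$ on $\{w>0\}$, combined with the lower bound $u_0 \geq c\, d^{2/(\gamma+1)}$ (so that $u_0^{-\gamma-1} \leq C\, d^{-2}$) and the Hardy inequality $\int_\Omega w^2/d^2\, dx \leq C\int_\Omega |\nabla w|^2\, dx$ for $w \in H^1_0(\Omega)$; a truncation/density argument in the spirit of \cite{CanDeg} then legitimates $\psi$ as an admissible test function. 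Once this is done, on the support $\{w > \tilde w\}$ of $\psi$ one has $u = u_0 + w > u_0 + \tilde w \geq u_0$, so $1/u^\gamma - 1/u_0^\gamma < 0$, while $f(x,u) - M \leq 0$ by the very definition of $M$. Hence
\[
\int_\Omega |\nabla \psi|^2\, dx \,\leq\, 0,
\]
which forces $\psi \equiv 0$, i.e., $w \leq \tilde w$ almost everywhere in $\Omega$. Plugging into the boundary bounds for $u_0$ and $\tilde w$ then yields the claim.

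The main obstacle is exactly this rigorous passage from the $C^1_c(\Omega)$-weak formulation of the singular equations for $u$ and $u_0$ to the $H^1_0$ test function $\psi$: one has to handle simultaneously the non-integrability of $1/u^\gamma$ and $1/u_0^\gamma$ separately, and it is only their \emph{difference} that can be integrated against $\psi$. The Hardy inequality on $w \in H^1_0(\Omega)$, paired with the sharp boundary behaviour of $u_0$ inherited from \cite{CanDeg}, is what makes this cancellation quantitative and thus constitutes the technical heart of the argument.
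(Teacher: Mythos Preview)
Your argument is correct but takes a considerably longer route than the paper's. Instead of decomposing $u=u_0+w$ and bounding the two pieces separately, the paper simply observes that, because $u\in C(\overline\Omega)$ and $f$ satisfies $(hp)$, one has $-\Delta u \le C/u^{\gamma}$ in the weak distributional sense in $\Omega$ for some $C=C(f,\gamma,\Omega,\|u\|_{L^\infty})$; a scalar comparison principle from \cite{CanDeg} (Theorem~2.2 and Lemma~2.8 there) then yields directly $u\le C\,u_1^{1/(\gamma+1)}$, where $u_1$ solves $-\Delta u_1=1$ in $\Omega$ with zero Dirichlet data, and the conclusion follows from $u_1\in C^1(\overline\Omega)$ via the mean value theorem. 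Your approach trades this black-box comparison for an explicit maximum-principle argument on $w$, but the price is exactly the ``main obstacle'' you flag: justifying $\psi=(w-\tilde w)_+$ as a test function in the singular weak formulation, for which you need the sharp lower bound $u_0\ge c\,d^{2/(\gamma+1)}$ together with Hardy's inequality and a cutoff limit. Both proofs ultimately lean on \cite{CanDeg} (yours for the decomposition and the two-sided behaviour of $u_0$, the paper's for the scalar comparison with $u_1$), but the paper's three-line version sidesteps the test-function issue entirely. One small dividend of your route is that it isolates the finer information $w\le C\,d$ near $\partial\Omega$, which the paper's argument does not single out.
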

\begin{proof} Since $u\in C(\overline{\Omega})$ and $f$ satisfies $(hp)$, using \eqref{problem}, we obtain in the weak distributional meaning
\begin{equation*}
-\Delta u\leq \frac{C}{u^{\gamma}} \qquad \text{in } \,\, \Omega,
\end{equation*}
for some positive constant $C=C(f,\gamma,\Omega,\|u\|_{L^{\infty}(\Omega)})$. By \cite[Theorem 2.2, Lemma 2.8]{CanDeg} it follows that
\begin{equation*}
u(x)\leq C u_1(x)^{\frac{1}{\gamma+1}} \qquad \text{in } \,\, \Omega,
\end{equation*}
where $u_1$ is the solutions to $-\Delta u_1=1$ in $\Omega$ with zero Dirichlet boundary condition. Since $u_1\in C^1(\overline \Omega)$, the result follows by the mean value theorem.
\end{proof}
In the following we will  denote by $\chi(A)$  the characteristic function of a set $A$ and, with no loss of generality, we will assume that $\nu =e_1$. We have
\begin{prop}\label{prop:alpha>gamma}
For any $\lambda <0$ we have that
\begin{equation*}
[(u-u_\lambda)^+]^s\cdot \chi(\Omega_\lambda)\in H^1_0(\Omega_\lambda),
\end{equation*}
where $\Omega_\lambda :=\{x\in \Omega\,:\, x_1\leq\lambda\}$, provided that
$$s\geq \max\{\frac{\gamma+1}{2}\,,\,1\}
.$$
\end{prop}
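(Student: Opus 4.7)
The plan is to realize $V := [(u-u_\lambda)^+]^s\chi(\Omega_\lambda)$ as the $H^1(\Omega_\lambda)$-limit of the truncations $V_\eta := [((u-u_\lambda)^+ - \eta)^+]^s$, each of which I will show lies in $H^1_0(\Omega_\lambda)$. First I would verify that $(u-u_\lambda)^+$ is continuous on $\overline{\Omega_\lambda}$ and vanishes on $\partial \Omega_\lambda$: for $\lambda<0\leq \lambda_1(e_1)$ (by symmetry of $\Omega$), the reflected cap $(\Omega_\lambda)'$ lies in $\Omega$, so $u_\lambda \in C(\overline{\Omega_\lambda})$; on $\partial \Omega_\lambda \cap \partial \Omega$ we have $u=0\leq u_\lambda$, and on $T_\lambda \cap \partial \Omega_\lambda$ the reflection is the identity, so $u = u_\lambda$. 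Hence for every $\eta>0$ the set $\{(u-u_\lambda)^+>\eta\}$ has closure compactly contained in $\Omega_\lambda$, and since $u,u_\lambda\in C^2(\Omega)$ and $s\geq 1$, $V_\eta$ is Lipschitz on its support, so $V_\eta \in H^1_0(\Omega_\lambda)$.

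To pass to the limit $\eta \to 0^+$, uniform convergence $V_\eta \to V$ in $L^\infty(\Omega_\lambda)$ (hence in $L^2$) follows from the mean value theorem together with $\|u\|_\infty<\infty$ and $s\geq 1$. For the gradients, the chain rule yields a.e.
$$
\nabla V_\eta \,=\, s\,[((u-u_\lambda)^+-\eta)^+]^{s-1}\chi_{\{u-u_\lambda>\eta\}}(\nabla u - \nabla u_\lambda)\,\longrightarrow\, \nabla V,
$$
and these are dominated pointwise by $G := s\,[(u-u_\lambda)^+]^{s-1}(|\nabla u|+|\nabla u_\lambda|)$. Once $G\in L^2(\Omega_\lambda)$ is established, dominated convergence forces $\nabla V_\eta \to \nabla V$ in $L^2$, and hence $V\in H^1_0(\Omega_\lambda)$ since $H^1_0(\Omega_\lambda)$ is $H^1$-closed.

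The main work is therefore the bound $G\in L^2(\Omega_\lambda)$. Lemma \ref{lem:viva} gives $(u-u_\lambda)^+\leq u \leq C\,d^{1/(\gamma+1)}$ in a tubular neighborhood of $\partial \Omega$. A pointwise gradient estimate $|\nabla u(x)|\leq C\,d(x)^{-\gamma/(\gamma+1)}$ I would derive from $-\Delta u = 1/u^\gamma + f(x,u)$ by standard interior elliptic regularity on balls of radius $\tfrac12 d(x)$, using Lemma \ref{lem:viva} to bound the right-hand side, which behaves like $d^{-\gamma/(\gamma+1)}$; the same estimate holds for $u_\lambda$ by reflection. Away from the corner $T_\lambda\cap\partial\Omega$, $u_\lambda$ is bounded below on the relevant portion of $\partial\Omega$, so $(u-u_\lambda)^+$ vanishes in a whole neighborhood of that piece. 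In a neighborhood of the corner, combining the two estimates gives $G^2\leq C\,d(x)^{2(s-1-\gamma)/(\gamma+1)}$, which is integrable in $n$ dimensions exactly when $s>(\gamma+1)/2$.

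The main obstacle is the endpoint $s=(\gamma+1)/2$, where this pointwise bound leaves only a logarithmically divergent integral. To close the borderline case I would use the refined symmetric estimate $(u-u_\lambda)^+\leq \min\{u,u_\lambda\}\leq C\min\{d(x)^{1/(\gamma+1)},\,d(x_\lambda)^{1/(\gamma+1)}\}$: near the corner both distances tend to zero simultaneously, and the resulting product bound (or, equivalently, the Lipschitz vanishing of $u-u_\lambda$ along $T_\lambda$) supplies the missing factor needed to make the integral converge. Once $G\in L^2(\Omega_\lambda)$ is in hand, the dominated-convergence argument of the second paragraph completes the proof.
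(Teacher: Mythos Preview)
Your approach is fundamentally different from the paper's and contains two genuine gaps.

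First, the gradient bound $|\nabla u(x)|\leq C\,d(x)^{-\gamma/(\gamma+1)}$ cannot be obtained from Lemma~\ref{lem:viva} as you claim. That lemma gives only the \emph{upper} bound $u\leq C\,d^{1/(\gamma+1)}$, which translates into a \emph{lower} bound on $u^{-\gamma}$, not an upper bound. Interior elliptic estimates on $B_{d(x)/2}(x)$ require control of $\|u^{-\gamma}\|_{L^\infty(B_{d(x)/2}(x))}$, and for that you need a \emph{lower} bound on $u$ near $\partial\Omega$; no such bound is available from the paper's stated results. (Something like $u\gtrsim d^{2/(\gamma+1)}$ can be proved by a separate barrier or comparison argument, but this is additional work you have not supplied and have misattributed to Lemma~\ref{lem:viva}.)

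Second, your endpoint argument at $s=(\gamma+1)/2$ rests on the false inequality $(u-u_\lambda)^+\leq \min\{u,u_\lambda\}$: on the set $\{u>u_\lambda\}$ one has $(u-u_\lambda)^+=u-u_\lambda\leq u$, but there is no reason for $u-u_\lambda\leq u_\lambda$. The vague appeal to ``Lipschitz vanishing along $T_\lambda$'' does not produce the missing decay near the corner $T_\lambda\cap\partial\Omega$.

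The paper sidesteps both issues by a Caccioppoli-type argument: it tests the equations for $u$ and $u_\lambda$ against $\Psi_\varepsilon=[(u-u_\lambda)^+]^\alpha\varphi_\varepsilon^2\chi(\Omega_\lambda)$, where $\varphi_\varepsilon$ is a cutoff vanishing on $\{d<\varepsilon\}$ with $|\nabla\varphi_\varepsilon|\leq C/d$. After subtraction and Young's inequality, all the boundary singularity is pushed into the term $\int_{\Omega_\lambda}|\nabla\varphi_\varepsilon|^2[(u-u_\lambda)^+]^{\alpha+1}$, and the \emph{upper} bound of Lemma~\ref{lem:viva} alone shows this is $O(\varepsilon^{(\alpha+1)/(\gamma+1)-1})$, which stays bounded exactly when $\alpha\geq\gamma$, i.e., $s=(\alpha+1)/2\geq(\gamma+1)/2$, endpoint included. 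Fatou's lemma then finishes the proof. The key point is that no pointwise control on $\nabla u$ near $\partial\Omega$ is ever needed.
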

\begin{proof}
Let  $g_{\varepsilon}(t):\mathbb{R}^+\rightarrow \mathbb{R}^+$ be locally Lipschitz continuous and  such that
\begin{equation}\nonumber
\begin{cases}
g_{\varepsilon}(t)=0 & \text{in $[0,\varepsilon]$,}  \\
g_{\varepsilon}(t)=1 & \text{in $[2\varepsilon, +\infty)$,} \\
g'_{\varepsilon}(t)\leq \frac{C}{\varepsilon} & \text{in $(\varepsilon, 2\varepsilon)$.}
\end{cases}
\end{equation}
We set
\begin{equation}\nonumber
\varphi_\varepsilon (x):=
\begin{cases}
g_\varepsilon(d(x))& \text{in $\mathcal I_\delta(\partial \Omega)$,}  \\
1& \text{in $\Omega \setminus \mathcal I_\delta(\partial \Omega)$,}  \\
\end{cases}
\end{equation}
where it is convenient to choose $\varepsilon >0$ such that  $2\varepsilon <\delta$. We note that
\begin{equation}\label{eq:suppppnablafi}
{supp}\, |\nabla \varphi_\varepsilon|\subseteq \Big\{x\in \mathcal I_\delta \,:\, \varepsilon<d(x)<2 \varepsilon \Big\}
\end{equation}
and almost everywhere
$$|\nabla \varphi_{\varepsilon}(x)|\leq |g'_\varepsilon(d(x))||\nabla d(x)|\leq \frac{C}{d(x)}.$$
Let us consider
\begin{equation*}
\Psi_\varepsilon=[(u-u_\lambda)^+]^{\alpha}\,\varphi_\varepsilon^2\,\chi(\Omega_\lambda),
\end{equation*}
with $\alpha>1$ to be chosen later. By \eqref{eq:fgakghfhksajdgfjashagc} we deduce that
\begin{equation}\label{eq:fgakghfhksajdgfjashagcc}
\int_{R_\lambda(\Omega)}\,(\nabla u_\lambda,  \nabla \varphi)\,dx  = \int_{R_\lambda(\Omega)} \frac{\varphi}{({u_\lambda})^\gamma}\,dx + \int_{R_\lambda(\Omega)}f(x_\lambda,u_\lambda){\varphi}\,dx \qquad\forall \varphi\in C^1_c({R_\lambda(\Omega)}),
\end{equation}
as well. By standard density arguments it follows that we can plug $\Psi_\varepsilon$ as test function  in \eqref{eq:fgakghfhksajdgfjashagc} and  in \eqref{eq:fgakghfhksajdgfjashagcc} and then,  subtracting, we obtain
\begin{eqnarray}\nonumber
&&\alpha \int_{\Omega_\lambda}\,|\nabla (u-u_\lambda)^+|^2[(u-u_\lambda)^+]^{\alpha-1}\varphi_\varepsilon^2\,dx  \\\nonumber
&&\leq 2  \int_{\Omega_\lambda}\,|\nabla (u-u_\lambda)^+||\nabla \varphi_\varepsilon|\varphi_\varepsilon [(u-u_\lambda)^+]^{\alpha}\,dx
\\\nonumber &&+  \int_{\Omega_\lambda}\Big (u^{-\gamma}-u_\lambda^{-\gamma}\Big)[(u-u_\lambda)^+]^{\alpha}\,\varphi_\varepsilon^2\,dx
\\\nonumber &&+  \int_{\Omega_\lambda}\Big (f(x,u)-f(x_\lambda,u_\lambda)\Big)[(u-u_\lambda)^+]^{\alpha}\,\varphi_\varepsilon^2\,dx
\\\nonumber &&\leq 2  \int_{\Omega_\lambda}\,|\nabla (u-u_\lambda)^+||\nabla \varphi_\varepsilon|\varphi_\varepsilon [(u-u_\lambda)^+]^{\alpha}\,dx \\\nonumber &&
\\\nonumber
&&+ \int_{\Omega_\lambda}\Big (f(x,u)-f(x,u_\lambda)\Big)[(u-u_\lambda)^+]^{\alpha}\,\varphi_\varepsilon^2\,dx,
\end{eqnarray}
where we used that  $f(\cdot, t)$ is non decreasing in the $x_1$-direction in $\Omega_0$ and that
$u^{-\gamma}-u_\lambda^{-\gamma}\leq 0$ in the support of $(u-u_\lambda)^+$. Moreover by the assumption $(hp)$
\begin{eqnarray}\label{eq:purpleaksjbskasj}
&&\alpha \int_{\Omega_\lambda}\,|\nabla (u-u_\lambda)^+|^2[(u-u_\lambda)^+]^{\alpha-1}\varphi_\varepsilon^2\,dx  \\\nonumber
&&\leq 2\int_{\Omega_\lambda}\,|\nabla (u-u_\lambda)^+||\nabla \varphi_\varepsilon|\varphi_\varepsilon[(u-u_\lambda)^+]^{\alpha}\,dx \\\nonumber &&
\\\nonumber
&&+ C(f, \|u\|_{L^{\infty}(\Omega)})\int_{\Omega_\lambda}[(u-u_\lambda)^+]^{\alpha+1}\,\varphi_\varepsilon^2\,dx
\\\nonumber
&&\leq 2\int_{\Omega_\lambda}\,|\nabla (u-u_\lambda)^+||\nabla \varphi_\varepsilon|\varphi_\varepsilon[(u-u_\lambda)^+]^{\alpha}\,dx+C(f,\alpha,  \|u\|_{L^{\infty}(\Omega)}).
\end{eqnarray}
By weighted Young inequality \eqref{eq:purpleaksjbskasj} becomes
\begin{eqnarray}\label{eq:purpleaksjbskasjj}
&&\frac{\alpha}{2}\int_{\Omega_\lambda}\,|\nabla (u-u_\lambda)^+|^2[(u-u_\lambda)^+]^{\alpha-1}\varphi_\varepsilon^2\,dx  \\\nonumber
&&\leq C(\alpha) \int_{\Omega_\lambda}\,|\nabla \varphi_\varepsilon|^2[(u-u_\lambda)^+]^{\alpha+1}\varphi_\varepsilon^2\,dx + C(f,\alpha,  \|u\|_{L^{\infty}(\Omega)}).
\end{eqnarray}
Using Lemma \ref{lem:viva} and  \eqref{eq:suppppnablafi}, we obtain
\begin{eqnarray}\label{eq:tatatangejedadklchs}
&&\int_{\Omega_\lambda}\,|\nabla \varphi_\varepsilon|^2[(u-u_\lambda)^+]^{\alpha+1}\varphi_\varepsilon^2\,dx= \int_{\Omega_\lambda\cap {supp}\,\nabla \varphi_\varepsilon}\,|\nabla \varphi_\varepsilon|^2[(u-u_\lambda)^+]^{\alpha+1}\varphi_\varepsilon^2\,dx\\\nonumber
&&\leq C\int_{\Omega_\lambda\cap {supp}\,\nabla \varphi_\varepsilon}\,\Big(d(x)\Big)^{-2}\Big(d(x)\Big)^{\frac{\alpha+1}{\gamma +1}}\,dx\leq C\varepsilon^{\frac{\alpha+1}{\gamma +1}-2}\mathcal L(\Omega_\lambda\cap {supp}\,\nabla \varphi_\varepsilon)
\end{eqnarray}
where by $\mathcal L (A)$ we denote the Lebesgue measure of a measurable set $A$. Moreover, since  $\Omega_\lambda\cap {supp}\,\nabla \varphi_\varepsilon\subset I_\delta(\partial \Omega)$, then $\mathcal L(\Omega_\lambda\cap {supp}\,\nabla \varphi_\varepsilon)\leq C\varepsilon$, for some positive constant $C=C(\Omega)$. Finally from \eqref{eq:purpleaksjbskasjj} and \eqref{eq:tatatangejedadklchs} we get
\begin{equation*}
\int_{\Omega_\lambda}\,|\nabla (u-u_\lambda)^+|^2[(u-u_\lambda)^+]^{\alpha-1}\varphi_\varepsilon^2\,dx\leq C  \end{equation*}
with $C= C(f,\alpha,\gamma,\delta,\Omega,\|u\|_{L^{\infty}(\Omega)})$, if $\alpha \geq \gamma$. By Fatou's Lemma we obtain
$$[(u-u_\lambda)^+]^{\frac{\alpha +1}{2}}\in H^1_0(\Omega_\lambda), \qquad \text{if}\,\, \alpha \geq \gamma.$$
\end{proof}
\noindent \emph{Proof of Theorem \ref{t3}.} Using the same notations of the proof of Proposition~\ref{prop:alpha>gamma} and arguing  as above, we consider
\begin{equation*}
\Psi_\varepsilon=[(u-u_\lambda)^+]^{\beta}\,\varphi_\varepsilon^2\,\chi(\Omega_\lambda),
\end{equation*}
with
\begin{equation}\label{eq:beta}
\beta>\max \Big\{1,\gamma, (\gamma+1)/2\Big\}.
\end{equation}
By density arguments we plug  $\Psi_\varepsilon$ as test function  in \eqref{eq:fgakghfhksajdgfjashagc} and in  \eqref{eq:fgakghfhksajdgfjashagcc} and then,  subtracting, we get that
\begin{eqnarray}\label{eq:equazioneinbeta}
&&\beta \int_{\Omega_\lambda}\,|\nabla (u-u_\lambda)^+|^2[(u-u_\lambda)^+]^{\beta-1}\varphi_\varepsilon^2\,dx \\\nonumber &&\leq 2  \int_{\Omega_\lambda}\,|\nabla (u-u_\lambda)^+||\nabla \varphi_\varepsilon|\varphi_\varepsilon [(u-u_\lambda)^+]^{\beta}\,dx \\\nonumber &&
\\\nonumber &&+  \int_{\Omega_\lambda}\Big (u^{-\gamma}-u_\lambda^{-\gamma}\Big)[(u-u_\lambda)^+]^{\alpha}\,\varphi_\varepsilon^2\,dx
\\\nonumber&&+ \int_{\Omega_\lambda}\Big (f(x,u)-f(x,u_\lambda)\Big)[(u-u_\lambda)^+]^{\beta}\,\varphi_\varepsilon^2\,dx, \\\nonumber
 &&\leq 2  \int_{\Omega_\lambda}\,|\nabla (u-u_\lambda)^+||\nabla \varphi_\varepsilon|\varphi_\varepsilon [(u-u_\lambda)^+]^{\beta}\,dx \\\nonumber &&
\\\nonumber
&&+C(f, \|u\|_{L^{\infty}(\Omega)}) \int_{\Omega_\lambda}[(u-u_\lambda)^+]^{\beta+1}\,\varphi_\varepsilon^2\,dx.
\end{eqnarray}
We  estimate the first term on the right-hand side of the last line of \eqref{eq:equazioneinbeta}  as follows. Using H\"older inequality and then Proposition \eqref{prop:alpha>gamma} we obtain
\begin{eqnarray}\nonumber
 &&\int_{\Omega_\lambda}\,|\nabla (u-u_\lambda)^+||\nabla \varphi_\varepsilon|\varphi_\varepsilon [(u-u_\lambda)^+]^{\beta}\,dx\\\nonumber
 &&\leq C(\gamma)\left(\int_{\Omega_\lambda}\,\Big|\nabla [(u-u_\lambda)^+]^{\frac{\gamma+1}{2}}\Big|^2\, dx\right)^\frac{1}{2}\left(\int_{\Omega_\lambda}\,|\nabla \varphi_\varepsilon|^2[(u-u_\lambda)^+]^{2\beta-(\gamma -1)}\,dx\right)^\frac{1}{2}\\\nonumber
&&\leq  C\left(\int_{\Omega_\lambda}\,|\nabla \varphi_\varepsilon|^2[(u-u_\lambda)^+]^{2\beta-(\gamma -1)}\,dx\right)^\frac{1}{2},
\end{eqnarray}
with $C= C(f,\gamma,\delta,\Omega,\|u\|_{L^{\infty}(\Omega)})$. Using \eqref{eq:suppppnablafi} again we infer that
\begin{equation*}
\int_{\Omega_\lambda}\,|\nabla (u-u_\lambda)^+||\nabla \varphi_\varepsilon|\varphi_\varepsilon [(u-u_\lambda)^+]^{\beta}\,dx\leq C\varepsilon^{\frac{2(\beta-\gamma)}{\gamma+1}}
 \end{equation*}
 and then
 \begin{equation}\label{eq:chianoviobobrumore}
\int_{\Omega_\lambda}\,|\nabla (u-u_\lambda)^+||\nabla \varphi_\varepsilon|\varphi_\varepsilon [(u-u_\lambda)^+]^{\beta}\,dx=o(1),\qquad \text{as}\,\, \varepsilon \rightarrow 0,
 \end{equation}
since, by \eqref{eq:beta},  ${2(\beta-\gamma)}/{(\gamma+1)}>0$. Then by \eqref{eq:equazioneinbeta} and \eqref{eq:chianoviobobrumore}, passing to the the limit, we deduce that
\begin{equation}\label{eq:sahhsdjialgidkj}
\int_{\Omega_\lambda}\,|\nabla (u-u_\lambda)^+|^2[(u-u_\lambda)^+]^{\beta-1}\,dx \leq C\int_{\Omega_\lambda}[(u-u_\lambda)^+]^{\beta+1}\,dx,
\end{equation}
with $C=C(\beta, \gamma, f, \|u\|_{L^{\infty}(\Omega)}) $ a positive constant.
As a consequence of Proposition \ref{prop:alpha>gamma}, recalling \eqref{eq:beta}, we can
apply Poincar\'e inequality in the r.h.s of \eqref{eq:sahhsdjialgidkj} to deduce that
\begin{eqnarray}\label{eq:sahhsdjialgidkjq}
&&\int_{\Omega_\lambda}\,|\nabla (u-u_\lambda)^+|^2[(u-u_\lambda)^+]^{\beta-1}\,dx \leq C\int_{\Omega_\lambda}\left([(u-u_\lambda)^+]^{\frac{\beta+1}{2}}\right)^2\,dx\\\nonumber
&&\leq C\cdot C_P(\Omega_\lambda)\int_{\Omega_\lambda}\,|\nabla (u-u_\lambda)^+|^2[(u-u_\lambda)^+]^{\beta-1}\,dx,\end{eqnarray}
were $C_P(\Omega_\lambda)\rightarrow 0$ as $\mathcal L(\Omega_\lambda)\rightarrow 0$. Thus, there exists $\delta=\delta(n,\beta, \gamma, f, \|u\|_{L^{\infty}(\Omega)})$ such that if
\begin{equation}\label{eq:deltedelta}
\mathcal L(\Omega_\lambda)\leq \delta,
\end{equation}
  then $C\cdot C_P(\Omega_{\lambda}) <1$ in \eqref{eq:sahhsdjialgidkjq}. This implies that
\begin{equation}\label{eq:partepositivazero}
(u-u_\lambda)^+=0\qquad \text{in}\,\, \Omega_\lambda,
\end{equation}
namely $u\leq u_\lambda$ in $\Omega_\lambda$. \\

\noindent \emph{Claim}: there exists $\bar \mu>0$ small such that
\begin{equation}\label{eq:partepositivazeroo}
u<u_\lambda \qquad \text{in}\,\, \Omega_\lambda,
\end{equation}
for any  $a(e_1)<\lambda\leq a(e_1)+\bar\mu$.\\

\noindent In fact we can fix $\bar \mu>0$ small so that \eqref{eq:partepositivazero} holds and provides that
\begin{equation}\nonumber
u\leqslant u_\lambda \qquad \text{in}\,\, \Omega_\lambda,
\end{equation}
for any  $a(e_1)<\lambda\leq a(e_1)+\bar\mu$. Therefore we only need to prove the strict inequality.   To prove this
assume by contradiction that, for some $\lambda$, with  $a(e_1)<\lambda\leq a(e_1)+\bar\mu$,  there exists a point $x_0\in\Omega_\lambda$ such that $u(x_0)=u_\lambda(x_0)$. Then let $r=r(x_0)>0$ be such that $B_r(x_0)\subset\subset \Omega_\lambda^\nu$. We have, in the classical sense (since $u\in C^2(\Omega)$),
\begin{equation}\label{eqtocotococoshia}
-\Delta\,(u-u_\lambda)=\left(\frac{1}{u^\gamma}-\frac{1}{u_\lambda^\gamma}\right) +\Big(f(x,u)-f(x_\lambda,u_\lambda)\Big) \qquad \text{in}\,\, B_r(x_0).
\end{equation}
>From \eqref{problem}, we deduce that there exists a positive constant $C=C(r,\lambda)$ such that
$$\min_{x\in B_r(x_0)}\{u(x), u_\lambda(x)\}\geq C>0.$$
Then (using the assumption ($hp$) as well) we can estimate the r.h.s to  \eqref{eqtocotococoshia} as
\begin{equation}\nonumber
\left|\left(\frac{1}{u^\gamma}-\frac{1}{u_\lambda^\gamma}\right) +\Big(f(x,u)-f(x_\lambda,u_\lambda)\Big)\right|\leq C |u-u_\lambda|\qquad \text{in}\,\, B_r(x_0),
\end{equation}
with $C=C(f, r,\lambda,\|u\|_{L^{\infty}(\Omega)})$.  Hence we  find  $\Lambda>0$ such that, from \eqref{eqtocotococoshia}, we obtain
\begin{equation}\nonumber
-\Delta\,(u-u_\lambda)+\Lambda(u-u_\lambda)\geq 0 \qquad \text{in}\,\, B_r(x_0)
\end{equation}
and we are in position to exploit
 the strong maximum principle  \cite{GT} to deduce that $u\equiv u_\lambda$ in $B_r(x_0)$.   By a covering argument it would follow that $u\equiv u_\lambda$ in
$\Omega_\lambda$ providing a contradiction with the Dirichlet condition and thus proving the claim.
\\

\noindent To proceed further we set
\begin{equation}\nonumber
\Lambda_0=\{\lambda > a(e_1): u< u_{t}\,\,\,\text{in}\,\,\,\Omega_t\,\,\,\text{for all
$t\in(a(e_1),\lambda]$}\},
 \end{equation}
 which is not empty thanks to \eqref{eq:partepositivazero}.  Also set
 \begin{equation}\nonumber
\lambda_0=\sup\,\Lambda_0.
\end{equation}
We have to show that actually  $\lambda_0 = \lambda_1(e_1)=0$.
Assume otherwise that $\lambda_0 < 0$ and note that, by continuity, we obtain that $u\leq u_{\lambda_0}$ in~$\Omega_{\lambda_0}$.
 Repeating verbatim the argument used in the proof of the
 previous claim, we deduce that $u< u_{\lambda_0}$ in $\Omega_{\lambda_0}$
unless  $u= u_{\lambda_0}$ in $\Omega_{\lambda_0}$.
But, as above, because of  the zero Dirichlet boundary conditions and since $u>0$ in the interior of the domain,
 the case $u\equiv u_{\lambda_0}$ in $\Omega_{\lambda_0}$ is  not possible if $\lambda_0 <0$. Thus $u< u_{\lambda_0}$ in $\Omega_{\lambda_0}$.\\

Now we fix a compact set $\mathcal{K} \subset\Omega_{\lambda_0}$ so that $\mathcal{L}(\Omega_{\lambda_0}\setminus \mathcal{K})\leq \frac{\delta}{2}$, with $\delta$  given by \eqref{eq:deltedelta}.
By compactness we find $\sigma =\sigma(\mathcal{K})>0$ such that
\[
u_{\lambda_0}-u\geq 2\sigma>0\,\quad \text{in}\quad \mathcal{K}\,.
\]
Take now  $\bar\varepsilon>0$ sufficiently small so that $\lambda_0+\bar\varepsilon<\lambda_1(\nu)$ and
 for any $0<\varepsilon\leq \bar\varepsilon$
\begin{itemize}
\item[$a)$] $u_{\lambda_0+\varepsilon}-u\geq \sigma>0$ in $\mathcal{K}$,
\item[$b)$] $\mathcal{L}(\Omega_{\lambda_0+\varepsilon}\setminus \mathcal{K})\leq \delta\,$.
\end{itemize}
Taking into account $a)$ it is now easy to check  that, for any $0<\varepsilon\leq \bar\varepsilon$, we have that $u\leq u_{\lambda_0+\varepsilon}$ on the boundary of $\Omega_{\lambda_0+\varepsilon}\setminus \mathcal{K}$.  Now we argue as above but considering  the test function
\begin{equation*}
\Psi_\varepsilon=[(u-u_{\lambda_0+\varepsilon})^+]^{\alpha}\,\varphi_\varepsilon^2\,\chi(\Omega_{\lambda_0+\varepsilon}\setminus \mathcal{K}).
\end{equation*}
Following verbatim the arguments  from equation \eqref{eq:beta} to equation \eqref{eq:partepositivazero}, since  $b)$ holds, we obtain
\begin{equation*}
u\leq u_{\lambda_0+\varepsilon}\qquad \text{in}\,\, \Omega_{\lambda_0+\varepsilon}\setminus \mathcal{K}.
\end{equation*}
Thus  $u\leq u_{\lambda_0+\varepsilon}$ in $\Omega_{\lambda_0+\varepsilon}$. We get a contradiction with the definition of $\lambda_0$ and conclude that actually $\lambda_0=\lambda_1(\nu)$.
Then it follows that
$$u(x)\leq u_{0}(x) \,\,  \text{ for } x\in \Omega_0^{e_1}.$$
In the same way, performing the moving plane method in the direction $-e_1$ we obtain
$$u(x)\geq u_0(x) \,\,  \text{ for } x \in \Omega_0^{e_1},$$that is, $u$ is symmetric w.r.t. $T_0^{e_1}$ and non-decreasing w.r.t. the $e_1$-direction in~$\Omega_0^{e_1}$.

\

Finally, if $\Omega$ is a ball of radius $R>0$, repeating the argument for any direction, it follows that $u$ is radially symmetric. The fact that
 $\displaystyle \frac{\partial u}{\partial r}(r)<0$ for $0<r<R$, follows by the Hopf's  Lemma.
\begin{flushright}
$\square$
\end{flushright}

\bigskip

\end{document}